\definecolor{violet}{rgb}{0.56, 0.0, 1.0}
\numberwithin{equation}{section}
\newtheorem{theorem}{Theorem}[section]
\theoremstyle{plain}
\theoremstyle{plain}
\theoremstyle{plain}
\newtheorem{corollary}[theorem]{Corollary}
\theoremstyle{definition}
\newtheorem{remark}[theorem]{Remark}
\newcommand{\N}{{\mathbb N}}
\newcommand{\R}{{\mathbb R}}
\newcommand{\eps}{\varepsilon}
\newcommand{\beq}{\begin{equation}}
\newcommand{\eeq}{\end{equation}}
\renewcommand{\le}{\leqslant}
\renewcommand{\ge}{\geqslant}
\newcommand{\leqnomode}{\tagsleft@true}
\newcommand{\reqnomode}{\tagsleft@false}
\title[Clustering in fractional Sobolev spaces]{A clustering theorem in fractional Sobolev spaces}
\author[F.G.\ D\"uzg\"un, A.\ Iannizzotto, V.\ Vespri]{Fatma Gamze D\"uzg\"un, Antonio Iannizzotto, Vincenzo Vespri}
\address[F.G.\ D\"uzg\"un, A.\ Iannizzotto]
{Dipartimento di Matematica e Informatica
\newline\indent
Universit\`a degli Studi di Cagliari
\newline\indent
Via Ospedale 72, 09124 Cagliari, Italy}
\email{fatmagamze.duzgun@unica.it}
\email{antonio.iannizzotto@unica.it}
\address[V.\ Vespri]{Dipartimento di Matematica e Informatica ''U.\ Dini''
\newline\indent
Universit\`a degli Studi di Firenze
\newline\indent
Viale Morgagni 67/A, 50134 Firenze, Italy}
\email{vincenzo.vespri@unifi.it}
\subjclass[2010]{35R11, 46E35, 35B65.}
\keywords{Clustering, Fractional Sobolev spaces, Regularity.}
\begin{document}

\begin{abstract}
We prove a general clustering result for the fractional Sobolev space $W^{s,p}$: whenever the positivity set of a function $u$ in a square has measure bounded from below by a multiple of the cube's volume, and the $W^{s,p}$-seminorm of $u$ is bounded from above by a convenient power of the cube's side, then $u$ is positive in a universally reduced cube. Our result aims at applications in regularity theory for fractional elliptic and parabolic equations. Also, by means of suitable interpolation inequalities, we show that clustering results in $W^{1,p}$ and $BV$, respectively, can be deduced as special cases.
\end{abstract}

\maketitle

\begin{center}
Version of \today\
\end{center}

\section{Introduction and main result}\label{sec1}

\noindent
Clustering (or local clustering) is a general property shared by the weak solutions of several types of elliptic and parabolic equations, as well as functions in De Giorgi classes. It can basically described as follows. Let $u$ be a function defined in a cube $Q_r$ (with side $r>0$) and $c>0$ be a given level, satisfying the following conditions:
\begin{itemize}[leftmargin=1cm]
\item[$(a)$] the measure of the level set $Q_r\cap\{u>c\}$ is bounded from below by a multiple of the measure of the cube;
\item[$(b)$] the seminorm of $u$ in a certain function space with domain $Q_r$ is bounded from above by a multiple of $c$ times a convenient power of $r$. 
\end{itemize}
Then, for any $\lambda\in(0,1)$ the region $\{u>\lambda c\}$ 'clusters' at some point $x_1\in Q_r$, occupying a cube around $x_1$, whose side is proportional to $r$ by a constant independent of $u$.
\vskip2pt
\noindent
While condition $(a)$ and the conclusion are basically measure-theoretical properties, not affected by the regularity of the function $u$, condition $(b)$ may take different forms according to the space where we pick $u$ (which in turn depends on the differential or variational problem considered). Clustering results have been proved for $W^{1,p}(Q_r)$ ($p>1$) \cite{DV}, $W^{1,1}(Q_r)$ \cite{DGV}, and $BV(Q_r)$ \cite{TV}, each one with a different $(b)$-type condition, mainly by means of one-dimensional Poincar\'e inequalities. Some clustering theorems are stated on balls, rather than cubes, but essentially equivalent.
\vskip2pt
\noindent
In this note we consider the fractional Sobolev space $W^{s,p}(Q_r)$ with $s\in(0,1)$, $p\ge 1$, thus specializing the $(b)$-condition by means of the Gagliardo seminorm (see \cite{A,DPV,L} for an introduction to this class of function spaces). Precisely, for any open set $\Omega\subset\R^N$ ($N\ge 2$) we define for all measurable $u:\Omega\to\R$ the Gagliardo seminorm
\[[u]_{s,p,\Omega} = \Big[\iint_{\Omega\times\Omega} \frac{|u(x)-u(y)|^p}{|x-y|^{N+ps}}\,dx\,dy\Big]^\frac{1}{p}.\]
We say that $u\in W^{s,p}(\Omega)$, if $u\in L^p(\Omega)$ and $[u]_{s,p,\Omega}<\infty$. In order to state our result, for all $x\in\R^N$ and all $r>0$ let
\[Q_r(x) = \prod_{i=1}^N\Big(x^i-\frac{r}{2},\,x^i+\frac{r}{2}\Big)\]
be the $N$-dimensional open cube centered at $x$ with side $r$, so $|Q_r(x)|=r^N$ and ${\rm diag}(Q_r(x))=\sqrt{N}r$ (we denote $Q_r=Q_r(0)$, while $|A|$ always stands for the $N$-dimensional Lebesgue measure of any set $A\subset\R^N$). Our result is the following:

\begin{theorem}\label{main}
Let $s\in(0,1)$, $p\ge 1$, $x_0\in\R^N$, $r>0$ be given, and let $u\in W^{s,p}(Q_r(x_0))$, $c>0$ satisfy
\begin{itemize}[leftmargin=1cm]
\item[$(a)$] $\displaystyle\Big|\big\{x\in Q_r(x_0):\,u(x)>c\big\}\Big| > \alpha r^N$ $(\alpha\in(0,1))$;
\item[$(b)$] $[u]_{s,p,Q_r(x_0)} \le \gamma c r^\frac{N-ps}{p}$ $(\gamma>0)$.
\end{itemize}
Then, for all $\delta,\lambda\in(0,1)$ there exist $x_1\in Q_r(x_0)$, $\eta\in(0,1)$ (with $\eta$ independent of $u$) s.t.\
\[\Big|\big\{x\in Q_{\eta r}(x_1):\,u(x)>\lambda c\big\}\Big| > (1-\delta)(\eta r)^N.\]
\end{theorem}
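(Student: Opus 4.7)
My approach is to tile $Q_r(x_0)$ by a regular grid of congruent sub-cubes of side $\eta r$, extract a large sub-family of them on which $\{u>c\}$ retains substantial mass via pigeonhole from $(a)$, and then argue by contradiction. If on each of these sub-cubes the conclusion failed, then $\{u\le\lambda c\}$ would still occupy a fixed fraction of the sub-cube, and bounding the Gagliardo double integral from below by the piece lying over the product of these two subsets would force $[u]_{s,p,Q_r(x_0)}^p$ to be of order $c^p r^{N-ps}\eta^{-ps}$. For $\eta$ small enough depending only on the data, this violates $(b)$. The heart of the matter is that the exponent $(N-ps)/p$ in $(b)$ is precisely the one that balances this scaling, so the whole proof is essentially a scaling computation plus a counting argument.

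More concretely, I would fix an integer $K$ to be chosen last and set $\eta=1/K$, so that $Q_r(x_0)=\bigcup_{i=1}^{K^N}Q_i$ with the $Q_i$ pairwise disjoint cubes of side $\eta r$. Applying pigeonhole to the sum $\sum_i|\{u>c\}\cap Q_i|>\alpha r^N$ yields a family $J\subseteq\{1,\ldots,K^N\}$ with $|J|\ge\frac{\alpha}{2}K^N$ and $|\{u>c\}\cap Q_i|>\frac{\alpha}{2}(\eta r)^N$ for all $i\in J$. If for no $i\in J$ the center $x_1$ of $Q_i$ satisfied the conclusion, then each $A_i:=\{u>c\}\cap Q_i$ and $B_i:=\{u\le\lambda c\}\cap Q_i$ would have measure at least $\frac{\alpha}{2}(\eta r)^N$ and $\delta(\eta r)^N$ respectively. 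Since $|u(x)-u(y)|\ge(1-\lambda)c$ on $A_i\times B_i$ and $|x-y|\le\sqrt{N}\,\eta r$ in $Q_i$, I would estimate
\[ [u]_{s,p,Q_i}^p \ge \iint_{A_i\times B_i}\frac{|u(x)-u(y)|^p}{|x-y|^{N+ps}}\,dx\,dy \ge C_1\,c^p(\eta r)^{N-ps}, \]
with $C_1=C_1(\alpha,\delta,\lambda,N,p,s)>0$ explicit. Summing over $i\in J$ and using superadditivity of $[\,\cdot\,]_{s,p,\cdot}^p$ over pairwise disjoint open subsets gives $[u]_{s,p,Q_r(x_0)}^p\ge\frac{\alpha C_1}{2}\,c^p r^{N-ps}\eta^{-ps}$, which, compared with the $p$-th power of $(b)$, forces $\eta\ge\eta_0$ for an explicit $\eta_0=\eta_0(\alpha,\delta,\lambda,\gamma,N,p,s)>0$. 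Picking $K>1/\eta_0$ at the outset provides the contradiction.

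The main obstacle I anticipate is not analytical but combinatorial: tracking how the constants propagate through the pigeonhole and the summation so that the final $\eta$ depends only on $\alpha,\delta,\lambda,\gamma,N,p,s$ and not on $u$ or $r$, and arranging $\eta=1/K$ with $K$ integer so that the grid tiles $Q_r(x_0)$ exactly. The nonlocal Gagliardo seminorm does all the analytic work for free, since its $p$-th power is superadditive over disjoint sub-domains and already encodes the correct scaling $(N-ps)/p$; this is why no Poincar\'e inequality is needed, as the authors emphasize in the introduction.
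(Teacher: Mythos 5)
Your proposal is correct, and its skeleton --- a grid partition of $Q_r(x_0)$ into $K^N$ subcubes, a counting argument showing that a definite fraction of them carries a proportional amount of $\{u>c\}$, and a contradiction with $(b)$ through the scaling factor $K^{ps}$ --- is the same as the paper's. Where you genuinely diverge is in the analytic core and in the treatment of general $u$. The paper first assumes $u$ continuous, fixes $x$ with $u(x)\le\lambda c$ and integrates over $\{u>\frac{\lambda+1}{2}c\}\cap Q$ using H\"older's inequality, then integrates in $x$, to bound the double integral over $Q\times Q$ from below; afterwards it removes the continuity assumption by approximating in $W^{s,p}$ with smooth functions, invoking Fatou's lemma and a subsequence extraction to pin down the cube. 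Your direct estimate
\[
\iint_{A_i\times B_i}\frac{|u(x)-u(y)|^p}{|x-y|^{N+ps}}\,dx\,dy \;\ge\; \frac{(1-\lambda)^p c^p\,|A_i|\,|B_i|}{(\sqrt{N}\,\eta r)^{N+ps}}
\]
uses only that the integrand is nonnegative and $|x-y|\le\sqrt{N}\,\eta r$ on $Q_i\times Q_i$, so it is valid for an arbitrary $u\in W^{s,p}(Q_r(x_0))$: H\"older's inequality, the auxiliary level $\frac{\lambda+1}{2}c$, the continuity assumption, and the entire density/Fatou step become unnecessary, which is a real simplification. Your formulation also makes the independence of $\eta$ from $u$ transparent, since you fix $K>1/\eta_0$ (with $\eta_0$ depending only on $\alpha,\delta,\lambda,\gamma,N,p,s$) before running the contradiction, whereas the paper argues ``for all $k$'' and lets $k\to\infty$, leaving the uniform choice of $k$ --- which it then needs in its approximation step --- slightly implicit. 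The only details to tidy up are cosmetic: take the subcubes pairwise disjoint up to null sets (half-open tiling), observe that the measure condition for the open cube $Q_{\eta r}(x_1)$ coincides with that for $Q_i$ since they differ by a null set, and record the explicit constant $C_1=\alpha\delta(1-\lambda)^p/\bigl(2N^{(N+ps)/2}\bigr)$; your pigeonhole bound $\#J\ge\frac{\alpha}{2}K^N$ is a harmless weakening of the paper's $\frac{\alpha}{2-\alpha}K^N$.
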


\vskip2pt
\noindent
We display the proof in Section \ref{sec2}.
\vskip2pt
\noindent
Clustering results find applications in regularity theory for both elliptic and parabolic PDE's. To be more specific, in regularity theory one of the essential tools is the so-called {\em critical mass lemma} which states that, under suitable conditions, there exists an absolute constant $\nu\in(0,1)$ s.t.\ whenever $u$ is a weak solution of an elliptic or parabolic equation, $\mu$ is the infimum of $u$ in a ball $B_r\subset\R^N$ with radius $r>0$, and the measure of the set $\{u>\mu+\eps\}$ is larger than $\nu\omega_N r^N$ (where $\eps>0$ and $\omega_N>0$ denotes the volume of the $N$-dimensional unit ball), then $u\ge\mu+\eps/2$ a.e.\ in $B_{r/2}$. Such information is extremely important, as it marks out a region of positivity, which in turn plays a fundamental role in the proofs of H\"older continuity and Harnack estimates for $u$. Now the clustering theorem ensures that, provided $u$ has enough regularity, there is some part of the domain where the hypotheses of the critical mass lemma are satisfied. This method was introduced in \cite{DV} and then applied to nonlinear PDE's, both in the elliptic \cite{DMV} and the parabolic case \cite{DMV1}. Also, clustering is an essential tool for proving Harnack's inequality without continuity in anisotropic problems, see \cite[p.\ 370]{D}.
\vskip2pt
\noindent
The main motivation for our result is related to a reinterpretation of regularity theory for nonlinear, nonlocal equations driven by the $s$-fractional $p$-Laplacian, properly defined as the gradient of the functional
\[W^{s,p}(\R^N) \ni u\mapsto \frac{[u]_{s,p,\R^N}^p}{p}.\]
Such operator was introduced in \cite{IN} as an approximation of the $p$-Laplacian (for $s\to 1$) and in \cite{LL} as an approximation of the fractional infinity Laplacian (for $p\to\infty$), see also \cite{BCF} for nonlinear fractional operators arising from game theory. When $p\ge 2$ and $u$ is very smooth, the fractional $p$-Laplacian admits an alternative, pointwise representation as
\[(-\Delta)_p^s u(x) = 2\lim_{\eps\to 0^+}\int_{B^c_\eps(x)}\frac{|u(x)-u(y)|^{p-2}(u(x)-u(y))}{|x-y|^{N+ps}}\,dy.\]
H\"older continuity and Harnack's inequality have been established for the fractional $p$-Laplacian in the elliptic case in \cite{DKP,DKP1} through an adapted De Giorgi-Moser-Nash approach (see also \cite{C} for an alternative approach based on fractional De Giorgi classes), and higher H\"older regularity is proved, for instance, in the recent paper \cite{BDLMBS}. On the parabolic side, regularity results for the evolutive fractional $p$-Laplace equation have been established in \cite{L1,DZZ,V,V1}.
\vskip2pt
\noindent
An application of Theorem \ref{main} has already appeared in \cite{CDI}, where local clustering is employed, along with positivity expansion and a fractional critical mass lemma, to give an alternative proof of H\"older regularity for the solutions of the fractional $p$-Laplace equation. Also, we aim at proving classical Harnack inequalities for the corresponding evolutive equation in the singular case ($1<p<2$). A secondary, but hopefully interesting, motivation of the present note is that, in fact, previous clustering theorems in classical Sobolev spaces $W^{1,p}$ (see \cite{DV} for $p>1$, \cite{DGV} for $p=1$) and the bounded variation space $BV$ (see \cite{TV}) can be deduced from Theorem \ref{main}, through convenient interpolation and scaling inequalities. So, we design here a unified approach to the problem of clustering. In this connection, we present a very simple proof of known interpolation inequalities between $W^{1,p}$ (resp., $BV$) and $W^{s,p}$ with a pure seminorm control (see Remark \ref{emb}). Section \ref{sec3} is devoted to this subject. We mention that a similar clustering result for the linear fractional Laplacian, with a more general kernel, has been proved independently from us in \cite{C1}.

\section{Proof of Theorem \ref{main}}\label{sec2}

\noindent
First we assume that $u\in C(\overline{Q_r(x_0)})$. For any $k\in\N$, we partition $Q_r(x_0)$ into a family $\mathcal{F}_k$ of cubes with side $r/k$, so that for all $Q\in\mathcal{F}_k$ we have
\[|Q| = \Big(\frac{r}{k}\Big)^N, \ {\rm diag}(Q) = \frac{\sqrt{N}r}{k} = \sqrt{N}|Q|^\frac{1}{N}.\]
Clearly $\#\mathcal{F}_k=k^N$. Let $\alpha\in(0,1)$ be as in $(a)$ and set for all $Q\in\mathcal{F}_k$
\[\begin{cases}
Q\in\mathcal{F}_k^+ & \text{if $\displaystyle\Big|Q\cap\{u>c\}\Big|\ge\frac{\alpha}{2}|Q|$} \\
Q\in\mathcal{F}_k^- & \text{otherwise.}
\end{cases}\]
By hypothesis $(a)$, for all $Q\in\mathcal{F}_k^+$ we have
\begin{align*}
\alpha k^N|Q| &= \alpha|Q_r(x_0)| \\
&< \Big|Q_r(x_0)\cap\{u>c\}\Big| \\
&= \sum_{Q\in\mathcal{F}_k^+}\Big|Q\cap\{u>c\}\Big|+\sum_{Q\in\mathcal{F}_k^-}\Big|Q\cap\{u>c\}\Big|.
\end{align*}
By definition of $\mathcal{F}_k^\pm$, then,
\begin{align*}
\alpha k^N &< \sum_{Q\in\mathcal{F}_k^+}\frac{\big|Q\cap\{u>c\}\big|}{|Q|}+\sum_{Q\in\mathcal{F}_k^-}\frac{\big|Q\cap\{u>c\}\big|}{|Q|} \\
&\le \#\mathcal{F}_k^++\frac{\alpha}{2}\#\mathcal{F}_k^- \\
&= \frac{\alpha}{2}k^N+\Big(1-\frac{\alpha}{2}\Big)\#\mathcal{F}_k^+,
\end{align*}
which rephrases as
\beq\label{clu1}
\#\mathcal{F}_k^+ > \frac{\alpha}{2-\alpha}k^N.
\eeq
Now fix $\delta,\lambda\in(0,1)$. Two cases may occur:
\begin{itemize}[leftmargin=1cm]
\item[$(i)$] There exist $k\ge 2$, $Q\in\mathcal{F}_k^+$ s.t.\
\[\Big|Q\cap\{u>\lambda c\}\Big| > (1-\delta)|Q|.\]
Then, let $x_1\in Q_r(x_0)$ be the center of $Q$, $\eta=1/k\in(0,1)$, and the conclusion follows.
\item[$(ii)$] For all $k\ge 2$ and all $Q\in\mathcal{F}_k^+$ we have
\[\Big|Q\cap\{u>\lambda c\}\Big| \le (1-\delta)|Q|,\]
which is equivalent to
\beq\label{clu2}
\Big|Q\cap\{u\le\lambda c\}\Big| \ge \delta|Q|.
\eeq
Also, since $\lambda\in(0,1)$ and $Q\in\mathcal{F}_k^+$, we have
\beq\label{clu3}
\Big|Q\cap\Big\{u>\frac{\lambda+1}{2}\,c\Big\}\Big| \ge \frac{\alpha}{2}\,|Q|.
\eeq
Fix now $x,y\in Q$ s.t.\
\[u(x) \le \lambda c, \ u(y) > \frac{\lambda+1}{2}\,c,\]
hence
\[u(y)-u(x) > \frac{1-\lambda}{2}\,c.\]
By continuity of $u$, we have $|x-y|\ge\mu$ for some $\mu>0$ independent of $x$, $y$ (which makes all of the following integrals nonsingular). Now we start from \eqref{clu3} and integrate with respect to $y$, then we use H\"older's inequality:
\begin{align*}
\frac{\alpha(1-\lambda)}{4}\,c|Q| &\le \frac{1-\lambda}{2}\,c\Big|Q\cap\Big\{u>\frac{\lambda+1}{2}\,c\Big\}\Big| \\
&= \int_{Q\cap\{u>\frac{\lambda+1}{2}c\}}\frac{1-\lambda}{2}\,c\,dy \\
&\le \int_{Q\cap\{u>\frac{\lambda+1}{2}c\}}|u(x)-u(y)|\,dy \\
&\le {\rm diag}(Q)^\frac{N+ps}{p} \int_{Q\cap\{u>\frac{\lambda+1}{2}c\}}\frac{|u(x)-u(y)|}{|x-y|^\frac{N+ps}{p}}\,dy \\
&\le N^\frac{N+ps}{2p}|Q|^\frac{N+ps}{Np} \Big[\int_{Q\cap\{u>\frac{\lambda+1}{2}c\}}\frac{|u(x)-u(y)|^p}{|x-y|^{N+ps}}\,dy\Big]^\frac{1}{p}\Big|Q\cap\Big\{u>\frac{\lambda+1}{2}\,c\Big\}\Big|^\frac{p-1}{p} \\
&\le N^\frac{N+ps}{2p}|Q|^\frac{N+s}{N} \Big[\int_{Q\cap\{u>\frac{\lambda+1}{2}c\}}\frac{|u(x)-u(y)|^p}{|x-y|^{N+ps}}\,dy\Big]^\frac{1}{p},
\end{align*}
hence for all $x\in Q\cap\{u\le\lambda c\}$ we have
\beq\label{clu4}
\Big[\frac{\alpha(1-\lambda)c}{4N^\frac{N+ps}{2p}}\Big]^p |Q|^{-\frac{ps}{N}} \le \int_{Q\cap\{u>\frac{\lambda+1}{2}c\}}\frac{|u(x)-u(y)|^p}{|x-y|^{N+ps}}\,dy.
\eeq
Next we begin with \eqref{clu2}, use \eqref{clu4} and integrate with respect to $x$:
\begin{align*}
\Big[\frac{\alpha(1-\lambda)c}{4N^\frac{N+ps}{2p}}\Big]^p \delta|Q|^\frac{N-ps}{N} &\le \Big|Q\cap\{u\le\lambda c\}\Big| \int_{Q\cap\{u>\frac{\lambda+1}{2}c\}}\frac{|u(x)-u(y)|^p}{|x-y|^{N+ps}}\,dy \\
&\le \int_{Q\cap\{u\le\lambda c\}}\int_{Q\cap\{u>\frac{\lambda+1}{2}c\}}\frac{|u(x)-u(y)|^p}{|x-y|^{N+ps}}\,dy\,dx \\
&\le \iint_{Q\times Q}\frac{|u(x)-u(y)|^p}{|x-y|^{N+ps}}\,dx\,dy.
\end{align*}
Under our current assumptions, the inequality above holds for all $Q\in\mathcal{F}_k^+$. Further, we sum over $Q\in\mathcal{F}_k^+$ and use \eqref{clu1} along with hypothesis $(b)$:
\begin{align*}
\frac{\alpha}{2-\alpha}\,k^N \Big[\frac{\alpha(1-\lambda)c}{4N^\frac{N+ps}{2p}}\Big]^p \delta|Q|^\frac{N-ps}{N} &\le \sum_{Q\in\mathcal{F}_k^+}\iint_{Q\times Q}\frac{|u(x)-u(y)|^p}{|x-y|^{N+ps}}\,dx\,dy \\
&\le \iint_{Q_r(x_0)\times Q_r(x_0)}\frac{|u(x)-u(y)|^p}{|x-y|^{N+ps}}\,dx\,dy \\
&\le \gamma^p c^p r^{N-ps}.
\end{align*}
Recalling that $|Q|=(r/k)^N$, we get for all $k\ge 2$
\[\frac{\alpha^{p+1}(1-\lambda)^p\delta}{4^p(2-\alpha)N^\frac{N+ps}{2}}\,r^{N-ps}\,k^{ps} \le \gamma^p r^{N-ps},\]
hence
\[k^{ps} \le \frac{4^p(2-\alpha)N^\frac{N+ps}{2}\gamma^p}{\alpha^{p+1}(1-\lambda)^p\delta}.\]
Letting $k\to\infty$ we find a contradiction.
\end{itemize}
Thus, case $(i)$ must occur for some $k\ge 2$, which proves the assertion for continuous functions.
\vskip2pt
\noindent
Now let $u\in W^{s,p}(Q_r(x_0))$ be arbitrary. By \cite[Theorems 2.4, 5.4]{DPV} we can find a sequence $(u_n)$ in $C^\infty(\overline{Q_r(x_0)})$ s.t.\ $u_n\to u$ in $W^{s,p}(Q_r(x_0))$. In particular we have $u_n(x)\to u(x)$ for a.e.\ $x\in Q_r(x_0)$ and
\[\lim_n [u_n]_{s,p,Q_r(x_0)} = [u]_{s,p,Q_r(x_0)}.\]
Fix an arbitrary $\tilde\gamma>\gamma$, then by $(b)$ and the previous convergence we have for all $n\in\N$ big enough
\beq\label{clu5}
[u_n]_{s,p,Q_r(x_0)} < \tilde\gamma cr^\frac{N-ps}{p}.
\eeq
For a.e.\ $x\in Q_r(x_0)\cap\{u>c\}$ we have
\[\chi_{Q_r(x_0)\cap\{u_n>c\}}(x) \to 1\]
(henceforth, $\chi_A$ denotes the characteristic function of any set $A\subset\R^N$). By Fatou's lemma, then,
\begin{align*}
\Big|Q_r(x_0)\cap\{u>c\}\Big| &= \int_{Q_r(x_0)\cap\{u>c\}}1\,dx \\
&\le \liminf_n\int_{Q_r(x_0)\cap\{u>c\}}\chi_{Q_r(x_0)\cap\{u_n>c\}}(x)\,dx \\
&\le \liminf_n\Big|Q_r(x_0)\cap\{u_n>c\}\Big|.
\end{align*}
By $(a)$, for all $n\in\N$ big enough we have
\beq\label{clu6}
\Big|Q_r(x_0)\cap\{u_n>c\}\Big| > \alpha r^N.
\eeq
Fix now $\delta,\lambda\in (0,1)$, and pick any $\tilde\delta\in(0,\delta)$, $\tilde\lambda\in(\lambda,1)$. As in the previous case, for all $n\in\N$ big enough by \eqref{clu5} and \eqref{clu6} there exist $x_n\in Q_r(x_0)$ and a number $\eta\in(0,1)$ (independent of $n$) s.t.\
\[\Big|Q_{\eta r}(x_n)\cap\{u_n>\tilde\lambda c\}\Big| > (1-\tilde\delta)(\eta r)^N.\]
In fact, as seen before $\eta=1/k$, and $Q_{\eta r}(x_n)$ is one of the fixed $k^N$ cubes of the family $\mathcal{F}_k$. Passing if necessary to a subsequence, we may assume that $x_n$ is the same for all $n\in\N$. Let us denote it $x_1$, and set $Q=Q_{\eta r}(x_1)$, so for all $n\in\N$
\beq\label{clu7}
\Big|Q\cap\{u_n<\tilde\lambda c\}\Big| < \tilde\delta(\eta r)^N.
\eeq
As above we have $\chi_{Q\cap\{u_n<\tilde\lambda c\}}\to 1$ a.e.\ in $Q\cap\{u<\tilde\lambda c\}$, hence by Fatou's lemma and \eqref{clu7} we have
\begin{align*}
\Big|Q\cap\{u<\tilde\lambda c\}\Big| &\le \liminf_n\Big|Q\cap\{u_n<\tilde\lambda c\}\Big| \\
&\le \tilde\delta(\eta r)^N.
\end{align*}
Reversing the inequality we get
\[\Big|Q\cap\{u\ge\tilde\lambda c\}\Big| \ge (1-\tilde\delta)(\eta r)^N,\]
so recalling that $\tilde\lambda>\lambda$ and $\tilde\delta<\delta$, we have
\[\Big|Q\cap\{u>\lambda c\}\Big| > (1-\delta)(\eta r)^N,\]
which concludes the proof. \qed

\section{Special cases}\label{sec3}

\noindent
In this section we consider two special cases, respectively, $u\in W^{1,p}(Q_r(x_0))$ ($p\ge 1$) and $u\in BV(Q_r(x_0))$, previously studied in the literature. Such cases can be reduced to our framework by means of convenient interpolation inequalities (see \cite{BBM,P}), for which we present direct proofs.
\vskip2pt
\noindent
We begin with the case $W^{1,p}$, setting for all open $\Omega$ and all $u\in W^{1,p}(\Omega)$
\[\|\nabla u\|_{p,\Omega} = \Big[\int_\Omega|\nabla u(x)|^p\,dx\Big]^\frac{1}{p}.\]
The following result is equivalent to \cite[Proposition A.1]{DV} ($p>1$) and \cite{DGV} ($p=1$):

\begin{corollary}\label{sob}
Let $p\ge 1$, $x_0\in\R^N$, $r>0$ be given, and let $u\in W^{1,p}(Q_r(x_0))$, $c>0$ satisfy $(a)$ and
\begin{itemize}[leftmargin=1cm]
\item[$(b')$] $\|\nabla u\|_{p,Q_r(x_0)} \le \gamma' c r^\frac{N-p}{p}$ ($\gamma'>0$).
\end{itemize}
Then, the conclusion of Theorem \ref{main} holds.
\end{corollary}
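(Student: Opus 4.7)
The plan is to reduce Corollary \ref{sob} to Theorem \ref{main} by converting the gradient bound $(b')$ into a Gagliardo bound of type $(b)$ for a single, conveniently chosen $s\in(0,1)$, while $(a)$ transfers verbatim. The tool is a scaled, quantitative embedding $W^{1,p}(Q_r(x_0))\hookrightarrow W^{s,p}(Q_r(x_0))$, in the spirit of the estimates in \cite{BBM}.

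First I would assume $u\in C^1(\overline{Q_r(x_0)})$ and rely on the density of $C^\infty(\overline{Q_r(x_0)})$ in $W^{1,p}(Q_r(x_0))$ to handle the general case by the same Fatou argument used at the end of the proof of Theorem \ref{main}. For such smooth $u$, convexity of the cube gives $x+t(y-x)\in Q_r(x_0)$ for all $x,y\in Q_r(x_0)$, $t\in[0,1]$, so the fundamental theorem of calculus and Jensen's inequality yield
\[
|u(y)-u(x)|^p \le |y-x|^p\int_0^1 |\nabla u(x+t(y-x))|^p\,dt.
\]
Plugging this into the Gagliardo integrand, swapping the order of integration by Fubini, and for each fixed $t\in(0,1)$ substituting $z=x+t(y-x)$ (so that $dy=t^{-N}\,dz$, $|x-y|=|x-z|/t$, and $z$ ranges in $(1-t)x+tQ_r(x_0)\subset Q_r(x_0)$), the double integral factors as
\[
\int_0^1 t^{-p(1-s)}\,dt \cdot \iint_{Q_r(x_0)\times Q_r(x_0)} \frac{|\nabla u(z)|^p}{|x-z|^{N-p(1-s)}}\,dx\,dz,
\]
after enlarging the natural $z$-domain to the full cube in the estimate.

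Next I would fix any $s\in(1-1/p,1)$, e.g.\ $s=1-\tfrac{1}{2p}\in[1/2,1)$: this makes $p(1-s)<1$, so $\int_0^1 t^{-p(1-s)}\,dt$ is a pure constant, while the inner spatial integral of the Riesz-type kernel $|x-z|^{-(N-p(1-s))}$ is bounded by $C_N(\sqrt{N}r)^{p(1-s)}$ because the singularity is locally integrable in $\R^N$ and ${\rm diag}(Q_r(x_0))=\sqrt{N}r$. Combining with $(b')$ yields
\[
[u]_{s,p,Q_r(x_0)}^p \le C_{N,p}\,r^{p(1-s)}\|\nabla u\|_{p,Q_r(x_0)}^p \le C_{N,p}(\gamma')^p c^p r^{N-ps},
\]
which is precisely $(b)$ with $\gamma=C_{N,p}^{1/p}\gamma'$, and Theorem \ref{main} delivers the cluster. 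The main bookkeeping point is the $r$-scaling: the factor $r^{p(1-s)}$ arising from the spatial Riesz integral must exactly bridge the gap between the exponent $N-p$ of $(b')$ and the exponent $N-ps$ demanded by $(b)$; it does, automatically, because $N-p+p(1-s)=N-ps$. The endpoint $p=1$ needs no special treatment since the condition $s>1-1/p$ reduces to the trivial $s>0$.
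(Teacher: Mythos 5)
Your proof is correct, and it follows the same overall strategy as the paper: verify hypothesis $(b)$ for a suitable $s$ by estimating the Gagliardo seminorm via the fundamental theorem of calculus along segments (exploiting convexity of the cube, with no extension operator), then invoke Theorem \ref{main}; the density/Fatou step for general $u\in W^{1,p}$ is also standard and matches the paper's in spirit. The one genuine difference is the change of variables. The paper transforms the pair $(x,y)\mapsto(w,z)=(x+t(y-x),\,y-x)$, a measure-preserving shear, so the kernel decouples as $\int_{Q_2}|z|^{-\sigma}\,dz\cdot\|\nabla v\|_{p,Q_1}^p$ with $\sigma=N-p(1-s)<N$ and the $t$-integral is trivial; this yields the embedding inequality \eqref{sob1} for \emph{every} $s\in(0,1)$ (an intermediate estimate the paper then reuses for the $BV$ case), followed by the scaling $v(x)=u(rx)$. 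You instead fix $x$ and substitute $z=x+t(y-x)$ only in the $y$-variable, which produces the extra factor $t^{-p(1-s)}$ and hence forces the restriction $s>1-1/p$; since Corollary \ref{sob} needs only one value of $s$, your choice $s=1-\tfrac{1}{2p}$ makes this harmless, and your bookkeeping of the powers of $r$ (namely $r^{p(1-s)}\cdot r^{N-p}=r^{N-ps}$) is exactly right, so you obtain $(b)$ with $\gamma=C_{N,p}^{1/p}\gamma'$ directly at scale $r$ without normalizing to the unit cube. In short: same key idea, slightly different substitution; the paper's version buys the full range $s\in(0,1)$ in the embedding, while yours is marginally shorter but valid only for $s$ close to $1$, which suffices for the corollary.
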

\begin{proof}
In view of Theorem \ref{main}, we only need to show that $u\in W^{s,p}(Q_r(x_0))$ satisfies $(b)$, with a possibly different $\gamma>0$ independent of $u$. Without loss of generality we may assume $x_0=0$.
\vskip2pt
\noindent
First we prove that for all $s\in(0,1)$ there exists $C=C(N,s,p)>0$ s.t.\ for all $v\in W^{1,p}(Q_1)$
\beq\label{sob1}
[v]_{s,p,Q_1} \le C\|\nabla v\|_{p,Q_1}.
\eeq
First assume $v\in C^1(\overline{Q_1})$ and set $\sigma=N+ps-p<N$. Integrating on segments we have
\begin{align*}
\iint_{Q_1\times Q_1}\frac{|v(x)-v(y)|^p}{|x-y|^{N+ps}}\,dx\,dy &= \iint_{Q_1\times Q_1}\Big|\int_0^1 \nabla v(x+t(y-x))\cdot(y-x)\,dt\Big|^p\,\frac{dx\,dy}{|x-y|^{N+ps}} \\
&\le \iint_{Q_1\times Q_1}\int_0^1 \big|\nabla v(x+t(y-x))\big|^p\,dt\Big[\int_0^1 |y-x|^{p'}\,dt\Big]^{p-1}\,\frac{dx\,dy}{|x-y|^{N+ps}} \\
&= \iint_{Q_1\times Q_1}\int_0^1 \big|\nabla v(x+t(y-x))\big|^p\,dt\,\frac{dx\,dy}{|x-y|^\sigma} \\
&= \int_0^1\,\iint_{\R^N\times\R^N}\frac{|\nabla v(x+t(y-x))|^p}{|x-y|^\sigma}\chi_{Q_1\times Q_1}(x,y)\,dx\,dy\,dt.
\end{align*}
Fix $t\in[0,1]$ and set $z=y-x$, $w=x+t(y-x)$. By convexity of $Q_1$ we have
\[\begin{cases}
x = w-tz \in Q_1 \\
y = w+z-tz \in Q_1
\end{cases} \ \Longrightarrow \ 
\begin{cases}
w = (1-t)(w-tz)+t(w+z-tz) \in Q_1 \\
z = (w+z-tz)-(w-tz) \in Q_2.
\end{cases}\]
So for all $t\in[0,1]$ we have
\begin{align*}
\iint_{\R^N\times\R^N}\frac{|\nabla v(x+t(y-x))|^p}{|x-y|^\sigma}\chi_{Q_1\times Q_1}(x,y)\,dx\,dy &= \iint_{\R^N\times\R^N}\frac{|\nabla v(w)|^p}{|z|^\sigma}\chi_{Q_1\times Q_1}(w-tz,\,w+z-tz)\,dw\,dz \\
&\le \int_{Q_2}\frac{dz}{|z|^\sigma}\,\int_{Q_1}|\nabla v(w)|^p\,dw = C\|\nabla v\|_{p,Q_1}^p,
\end{align*}
with $C>0$ only depending on $N,s,p$. We next integrate with respect to $t$ and find
\[\iint_{Q_1\times Q_1}\frac{|v(x)-v(y)|^p}{|x-y|^{N+ps}}\,dx\,dy \le C\|\nabla v\|_{p,Q_1}^p,\]
which proves \eqref{sob1}. If $v\in W^{1,p}(Q_1)$ is arbitrary, then we can find a sequence $(v_n)$ in $C^\infty(\overline{Q_1})$ s.t.\ $v_n\to v$ in both $W^{1,p}(Q_1)$ and $W^{s,p}(Q_1)$. For all $n\in\N$ we have
\[[v_n]_{s,p,Q_1} \le C\|\nabla v_n\|_{p,Q_1},\]
so passing to the limit we get \eqref{sob1}.
\vskip2pt
\noindent
Now we recall some useful scaling formulas. Let $u\in W^{1,p}(Q_r)$ be as in the assumption. Setting $v(x)=u(rx)$ for all $x\in Q_1$, we have $v\in W^{1,p}(Q_1)$ and
\beq\label{sob2}
\|\nabla u\|_{p,Q_r} = r^\frac{N-p}{p}\|\nabla v\|_{p,Q_1},
\eeq
\beq\label{sob3}
[u]_{s,p,Q_r} = r^\frac{N-ps}{p}[v]_{s,p,Q_1}.
\eeq
Concatenating \eqref{sob3}, \eqref{sob1}, \eqref{sob2}, and hypothesis $(b')$ we get
\begin{align*}
[u]_{s,p,Q_r} &= r^\frac{N-ps}{p}[v]_{s,p,Q_1} \\
&\le Cr^\frac{N-ps}{p}\|\nabla v\|_{p,Q_1} \\
&= Cr^{1-s}\|\nabla u\|_{p,Q_r} \le (C\gamma')cr^\frac{N-ps}{p}.
\end{align*}
So $u$ satisfies $(b)$ with $\gamma=C\gamma'>0$ (independent of $u$). The conclusion now follows from Theorem \ref{main}.
\end{proof}

\noindent
Finally we consider the case $BV$. We recall that $u\in BV(\Omega)$ if $u\in L^1(\Omega)$ and the quantity
\[[u]_{BV(\Omega)} = \sup\Big\{\int_\Omega u\,{\rm div}\,\phi\,dx:\,\phi\in C^\infty_c(\Omega,\R^N),\,|\phi(x)|\le 1 \ \text{for all $x\in\Omega$}\Big\}\]
is finite. The following result is equivalent to \cite[Lemma 1.1]{TV}:

\begin{corollary}\label{bvo}
Let $x_0\in\R^N$, $r>0$ be given, and let $u\in BV(Q_r(x_0))$, $c>0$ satisfy $(a)$ and
\begin{itemize}[leftmargin=1cm]
\item[$(b'')$] $[u]_{BV(Q_r(x_0))} \le \gamma'' cr^{N-1}$ ($\gamma''>0$).
\end{itemize}
Then, the conclusion of Theorem \ref{main} holds.
\end{corollary}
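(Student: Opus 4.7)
My plan is to mimic the strategy of Corollary \ref{sob}: fix an arbitrary $s\in(0,1)$ and take $p=1$ in Theorem \ref{main}. It then suffices to show that hypothesis $(b'')$ implies hypothesis $(b)$ with $p=1$, i.e.\ that $[u]_{s,1,Q_r(x_0)}\le\gamma\,c\,r^{N-s}$ for some $\gamma>0$ independent of $u$, after which Theorem \ref{main} applies verbatim.

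The core ingredient is the embedding
\[[v]_{s,1,Q_1}\le C\,[v]_{BV(Q_1)}\quad\text{for all } v\in BV(Q_1),\]
with $C=C(N,s)>0$. For $v\in C^\infty(\overline{Q_1})\cap W^{1,1}(Q_1)$ this is exactly estimate \eqref{sob1} with $p=1$, which is proved in Corollary \ref{sob} by integrating along segments and performing the change of variables $z=y-x$, $w=x+t(y-x)$; in fact for $p=1$ no H\"older step is needed and the argument simplifies. For a general $v\in BV(Q_1)$ I would invoke the classical strict approximation theorem for $BV$ functions (the existence of $v_n\in C^\infty(\overline{Q_1})\cap W^{1,1}(Q_1)$ with $v_n\to v$ in $L^1(Q_1)$ and $\|\nabla v_n\|_{1,Q_1}\to [v]_{BV(Q_1)}$, as found in any standard monograph on $BV$ functions). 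Passing to a subsequence so that $v_n\to v$ a.e.\ and applying Fatou's lemma to the Gagliardo kernel yields
\[[v]_{s,1,Q_1}\le\liminf_n [v_n]_{s,1,Q_1}\le C\liminf_n\|\nabla v_n\|_{1,Q_1}=C[v]_{BV(Q_1)}.\]

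To transfer the inequality to $Q_r(x_0)$ I would rescale as in Corollary \ref{sob}: setting $v(x)=u(x_0+rx)\in BV(Q_1)$, a change of variables in the defining dual formula for the $BV$ seminorm gives $[u]_{BV(Q_r(x_0))}=r^{N-1}[v]_{BV(Q_1)}$ (replace the test field $\phi$ on $Q_r(x_0)$ by $\psi(x)=\phi(x_0+rx)$ on $Q_1$, which still satisfies $|\psi|\le 1$), while the Gagliardo seminorm scales as $[u]_{s,1,Q_r(x_0)}=r^{N-s}[v]_{s,1,Q_1}$. Combining these identities with the embedding and $(b'')$,
\[[u]_{s,1,Q_r(x_0)}=r^{N-s}[v]_{s,1,Q_1}\le Cr^{N-s}[v]_{BV(Q_1)}=Cr^{1-s}[u]_{BV(Q_r(x_0))}\le (C\gamma'')\,c\,r^{N-s},\]
so $u$ satisfies $(b)$ with $p=1$ and $\gamma=C\gamma''>0$ independent of $u$; Theorem \ref{main} then yields the conclusion.

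The only nontrivial ingredient is the strict density of smooth functions in $BV(Q_1)$, which I would simply quote rather than reprove; everything else is a mechanical repetition of the $W^{1,p}$ argument specialized to $p=1$. If one wished to avoid citing the density result, one could instead mollify $u$ on slightly smaller cubes $Q_{1-\epsilon}$ (after a reflection extension) and pass to the limit $\epsilon\to 0^+$, but this route is heavier and less transparent than invoking the standard approximation theorem.
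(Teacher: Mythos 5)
Your proposal is correct and follows essentially the same route as the paper: reduce $(b'')$ to $(b)$ with $p=1$ via the embedding $[v]_{s,1,Q_1}\le C[v]_{BV(Q_1)}$ (obtained from \eqref{sob1} and smooth approximation in the strict/$L^1$ sense), the scaling identities $[u]_{BV(Q_r)}=r^{N-1}[v]_{BV(Q_1)}$ and $[u]_{s,1,Q_r}=r^{N-s}[v]_{s,1,Q_1}$, and then Theorem \ref{main}. The only (harmless) variation is your limit passage by a.e.\ convergence and Fatou's lemma applied to the Gagliardo kernel, which is in fact a cleaner justification than the paper's bare claim that $v_n\to v$ in $W^{s,1}(Q_1)$ up to a subsequence.
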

\begin{proof}
As in the previous case, we assume $x_0=0$ and fix $s\in(0,1)$. First we see that there exists $C=C(N,s)>0$ s.t.\ for all $v\in BV(Q_1)$
\beq\label{bvo1}
[v]_{s,1,Q_1} \le C[v]_{BV(Q_1)}.
\eeq
Indeed, by classical density results (see for instance \cite[Theorem 1.17]{G}), there exists a sequence $(v_n)$ in $C^\infty(\overline{Q_1})$ s.t.\ $v_n\to v$ in $L^1(Q_1)$ and
\[\lim_n\|\nabla v_n\|_{1,Q_1} = [v]_{BV(Q_1)}.\]
By \eqref{sob1} we have for all $n\in\N$ and some $C>0$ independent of $n$
\[[v_n]_{s,1,Q_1} \le C\|\nabla v_n\|_{1,Q_1}.\]
Also, up to a subsequence $v_n\to v$ in $W^{s,1}(Q_1)$. So we can pass to the limit as $n\to\infty$ and find \eqref{bvo1}. In particular, then, for all $s\in(0,1)$ we see that $BV(Q_1)\subseteq W^{s,1}(Q_1)$ with continuous embedding.
\vskip2pt
\noindent
Let $u\in BV(Q_r)$ be as in the assumption. Setting $v(x)=u(rx)$ for all $x\in Q_1$, we have $v\in L^1(Q_1)$ and the scaling formula
\beq\label{bvo2}
[u]_{BV(Q_r)} = r^{N-1}[v]_{BV(Q_1)}.
\eeq
Indeed, fix $\phi\in C^\infty_c(Q_r,\R^N)$ s.t.\ $|\phi|\le 1$ in $\in Q_r$ and set $\psi(y)=\phi(ry)$ for all $y\in Q_1$, then $\psi\in C^\infty_c(Q_1,\R^N)$ and $|\psi|\le 1$ in $Q_1$. Moreover,
\begin{align*}
\int_{Q_r} u(x)\,{\rm div}\,\phi(x)\,dx &= r^N\int_{Q_1}u(ry)\,{\rm div}\,\phi(ry)\,dy \\
&= r^{N-1}\int_{Q_1}v(y)\,{\rm div}\,\psi(y)\,dy.
\end{align*}
Taking the suprema over $\phi$, $\psi$, respectively, we have \eqref{bvo2}. Now, using \eqref{sob3} (with $p=1$), \eqref{bvo1}, \eqref{bvo2}, and hypothesis $(b'')$ we have
\begin{align*}
[u]_{s,1,Q_r} &= r^{N-s}[v]_{s,1,Q_1} \\
&\le Cr^{N-s}[v]_{BV(Q_1)} \\
&= Cr^{1-s}[u]_{BV(Q_r)} \le (C\gamma'')cr^{N-s}.
\end{align*}
Therefore, $u$ satisfies $(b)$ with $p=1$ and $\gamma=C\gamma''>0$ (independent of $u$). The conclusion now follows from Theorem \ref{main}.
\end{proof}

\begin{remark}\label{emb}
A brief discussion about inequalities \eqref{sob1}, \eqref{bvo1} is in order. Note that in both inequalities we control a seminorm by means of another seminorm, which is a sharper result than usual embedding theorems involving full norms (which incorporate a $L^p$-norm as well). Inequality \eqref{sob1} is essentially contained in the proof of \cite[Theorem 1]{BBM}, where it is obtained, for a 'smooth' domain, via a seminorm-preserving extension operator $W^{1,p}(\Omega)\to W^{1,p}(\R^N)$. Extension operators of this type have been detected for connected, Lipschitz domains in \cite{B,B1} (see also \cite{J} for a more general class of Lipschitz domains). Besides, an extension operator for our cubic domain $Q_1$ can also be obtained by reflection. Nevertheless, we included our proof of \eqref{sob1} because it is very simple and {\em does not} involve any extension procedure, in addition it is easily adapted to any bounded, convex domain. A similar discussion applies to \eqref{bvo1} (see also \cite{BNLT} for the relation between bounded variation and fractional Sobolev functions in dimension one).
\end{remark}

\vskip4pt
\noindent
{\bf Acknowledgement.} The authors are members of GNAMPA (Gruppo Nazionale per l'Analisi Matematica, la Probabilit\`a e le loro Applicazioni) of INdAM (Istituto Nazionale di Alta Matematica 'Francesco Severi'). A.\ Iannizzotto is partially supported by the research project {\em Problemi non locali di tipo stazionario ed evolutivo} (GNAMPA, CUP E53C23001670001). The authors are grateful to S.\ Mosconi for stimulating discussions and to P.D.\ Lamberti for useful references on the extension problem. Finally, the authors are grateful to the anonymous Referee for her/his careful reading of this paper and kind words of appreciation.

\end{document}